\theoremstyle{plain}
\newtheorem{theorem}{Theorem}[section]
\newtheorem{proposition}[theorem]{Proposition}
\newtheorem{example}[theorem]{Example}
\newtheorem{remark}[theorem]{Remark}
\newcommand{\wi}{\widehat{I}}
\newcommand{\wb}{\widehat{\beta}}
\newcommand{\wib}{\widehat{I} \cup \widehat{\beta}}
\newcommand{\ib}{I \cup \beta}
\newcommand{\lupq}{\wi^{-p/q} \cup L}
\newcommand{\ucl}{\wi \cup L}
\newcommand{\Z}{\mathbb{Z}}
\newcommand{\C}{\mathbb{C}}
\definecolor{dark-gray}{gray}{0.3}
\newcommand{\siva}[1]{{\color{dark-gray}#1}}
\begin{document}

\title{The Alexander polynomial for closed braids in lens spaces}
\date{\today}

\author[B. Gabrov\v sek and E. Horvat]{Bo\v stjan Gabrov\v sek and Eva Horvat}

\address{Faculty of Mechanical Engineering and Faculty of Mathematics and Physics, University of Ljubljana, Slovenia;
Institute of Mathematics, Physics and Mechanics (IMFM), Slovenia} 
\email{bostjan.gabrovsek@fs.uni-lj.si}

\address{Faculty of Education, University of Ljubljana, 
%Kardeljeva plo\v s\v cad 16, 1000 Ljubljana, 
Slovenia} 
\email{eva.horvat@pef.uni-lj.si}

\subjclass[2010]{Primary: 57M27. Secondary: 20F36, 57M07}

\keywords{Burau representation, Alexander polynomial, links in lens spaces, mixed braids, mixed braid group}

\begin{abstract} We present a reduced Burau-like representation for the mixed braid group on one strand representing links in lens spaces and show how to calculate the Alexander polynomial of a link directly from the mixed braid.
\end{abstract}

\maketitle

% Arrow diagrams in S1-bundles (untwisted and twisted).
\section{Introduction} \label{sec:intro}

% ORIGINAL
It is widely known that if a knot $K$ is the closure of a braid $\beta$, an element of the Artin braid group $B_n$, the Alexander polynomial of $K$ is given by the formula
\begin{equation}\label{eq:a}\Delta_K(t) = \frac{1-t}{1-t^n}\det(I-\beta_*),\end{equation}
where $\beta_*$ is the reduced Burau representation of $\beta$.
In~\cite{morton} Morton extended this formula and introduced a colored Burau representation of the braid group that enables us to compute the multivariable Alexander polynomial of a link by similar algebraic means.

On the other hand, we know by the Lickorish-Wallace theorem~\cite{lick} that every  closed, orientable, connected 3-manifold $M$ can be obtained by performing Dehn surgery on a link in $S^3$. When studying links in $M$, we take a disjoint union of a surgery link, used to construct $M$, and the link in $M$ itself, to obtain a so-called mixed link, see Figure~\ref{fig:mixed-link}~(see also~\cite{dl1, gabr, gh0, gm} and for alternative approaches~\cite{cmr, mr1,mr2,mr3}).
The corresponding mixed braid group~\cite{lr1, lr2} enables us to represent a link in $M$ as a closure of a mixed braid.

Just as the braid group plays an important role in classical knot theory in $S^3$, the mixed braid group plays an important role in the theory of knots and links in other 3-manifolds. An increasing number of topological and algebraical tools are being developed in the ongoing investigation of constructing and generalizing classical knot invariant to those of knots in 3-manifolds (e.g. via Markov trace functions on the associated algebras, computations of skein modules, Chern-Simons theories, \ldots). In these studies lens spaces are of special interest, since, by the Lickorish-Wallace theorem, they can be viewed as constructing blocks of c.c.o. 3-manifolds. 

It was recently shown in~\cite{gh} how the Alexander polynomial of a link $L$ in $S^3$ changes when we think of $L$ as a mixed link and perform rational surgery on some of its components. In particular, an explicit formula was given that computes the Alexander polynomial of a link inside a lens space directly from the mixed link diagram.

In this paper we introduce a Burau-like representation of the mixed braid group on one strand $B_{1,n}$~\cite{la1, la2}, which enables us to generalize Formula~\eqref{eq:a} to lens spaces, i.e. it allows us to compute the Alexander polynomial of a link in the lens space $L(p,q)$ directly from the mixed braid group representative.

The paper is structured as follows. In Section~\ref{sec:mixedbraid}, we recall the definition of the mixed braid group on one strand.
In Section~\ref{sec:alexpoly}, we recall the definitions of the Alexander polynomial in $S^3$ and the Alexander polynomial(s) in $L(p,q)$.
In Section~\ref{sec:burau}, we recall Morton's results, introduce the Burau-like representation for the mixed braid group on one strand, and state our main result (Theorem~\ref{thm:main}), the algebraic formula for computing the Alexander polynomial.

\section{The mixed braid group on one strand} \label{sec:mixedbraid}

The lens space $L(p,q)$ is the manifold obtained by performing Dehn surgery on the unknot $\wi$ with surgery coefficients $-p/q$, where we assume $0 < q < p$ are two coprime integers. Following~\cite{lr1, dl1}, we fix $\wi$ pointwise and represent a link $L$ in $L(p,q)$ by the link $\ucl \subset S^3$, which we call a \emph{mixed link}, composed of the \emph{fixed component} $\wi$ and the \emph{moving component} $L$. 
When appropriate, we emphasize that surgery has been performed on the fixed component and denote the link $L \subset L(p,q)$ as $\lupq$. Taking a regular projection of $\ucl$ to the plane of $\wi$, we obtain a \emph{mixed link diagram}, as in Figure~\ref{fig:mixed-link}. 

\begin{figure}[ht]
	\centering
	\subfigure[A diagram of $\lupq$]{\quad\quad\begin{overpic}[page=1]{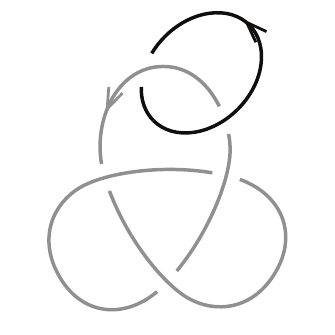}
	\put(26,94){$-p/q$}\put(82,79){$\wi$} \put(89,25){$\siva{L}$}
		\label{fig:mixed-link}
	\end{overpic}\quad\quad}
	\hspace{8ex}
	\subfigure[$I \cup \beta = t\sigma_1^3 \in B_{1,2}$]{\quad\quad\begin{overpic}[page=4]{slike}
		%\put(-10,40){$-p/q$}
		\label{fig:mixed-braid}
	\end{overpic}\quad\quad}
	\caption{A mixed link diagram (a) and a mixed braid (b).}\label{fig:mixed}
\end{figure}

By the Alexander theorem, we can represent $\ucl$ as the closure of a braid $I \cup \beta $ in the braid group $B_{1+n}$, where the strand $I$, called the \emph{fixed strand}, belongs to $\wi$, while the $n$ strands of $\beta$ are called \emph{moving strands} and belong to $L$. By the parting process described in~\cite{lr2} and~\cite{dl1}, we can assume that the fixed strand begins and ends on the left, while all crossings belonging to the moving components are pushed to the right and may occasionally make a simple wind around the fixed strand as in Figure~\ref{fig:mixed-braid}.

Fixing the vertical left strand $I$, we can form the \emph{mixed braid group on one strand} $B_{1,n}$, a subgroup of $B_{1+n}$, with the following presentation~\cite{la1}:
\begin{equation}
\begin{split}
B_{1,n} = \big \langle t, \sigma_1, \ldots, \sigma_{n-1} \mid\; & \sigma_i \sigma_j = \sigma_j \sigma_i \text{ for } |i-j|>1,\\
& \sigma_i \sigma_{i+1}\sigma_i = \sigma_{i+1}\sigma_i\sigma_{i+1} \text{ for } 1 \leq i < n-1, \\
& t \sigma_i = \sigma_i t \text{ for } i \geq 2,\\
& t \sigma_1 t \sigma_1 = \sigma_1 t \sigma_1 t \,\big \rangle,
\end{split}
\end{equation}
where the generators $t$ and $\sigma_i$ are illustrated in Figure~\ref{fig:gens}.
\begin{figure}[ht]
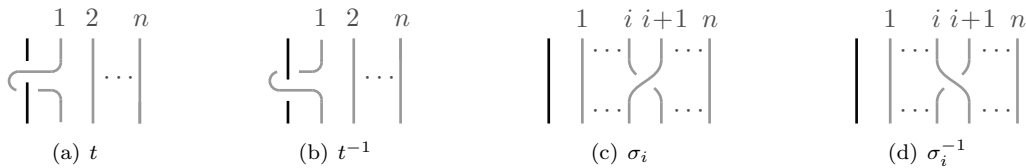

	\centering
	\subfigure[$t$]{\begin{overpic}[page=7]{slike}\put(36,66){$\siva{1}$}\put(57,66){$\siva{2}$}\put(88,66){$\siva{n}$}\put(68.5,28){$\siva{\cdots}$}
		\label{t}
	\end{overpic}}\qquad\qquad
	\subfigure[$t^{-1}$]{\begin{overpic}[page=8]{slike}\put(36,66){$\siva{1}$}\put(57,66){$\siva{2}$}\put(88,66){$\siva{n}$}\put(68.5,28){$\siva{\cdots}$}
		\label{tp}
	\end{overpic}}\qquad\qquad
	\subfigure[$\sigma_i$]{\begin{overpic}[page=6]{slike}\put(28,50.2){$\siva{1}$}\put(52,50.2){$\siva{i}$}\put(61,50.2){$\siva{i\!+\!1}$}\put(91,50.2){$\siva{n}$}\put(36,35){$\siva{\cdots}$}\put(76.5,35){$\siva{\cdots}$}\put(36,5){$\siva{\cdots}$}\put(76.5,5){$\siva{\cdots}$}
		\label{s}
	\end{overpic}}\qquad\qquad
	\subfigure[$\sigma_i^{-1}$]{\begin{overpic}[page=5]{slike}\put(28,50.2){$\siva{1}$}\put(52,50.2){$\siva{i}$}\put(61,50.2){$\siva{i\!+\!1}$}\put(91,50.2){$\siva{n}$}\put(36,35){$\siva{\cdots}$}\put(76.5,35){$\siva{\cdots}$}\put(36,5){$\siva{\cdots}$}\put(76.5,5){$\siva{\cdots}$}
		\label{sp}
	\end{overpic}}
	\caption{Generators of $B_{1,n}$ and their inverses.}\label{fig:gens}
\end{figure}

We advise the reader to see~\cite{lr2} for more details on this and more general constructions related to braiding mixed links.

%%% SECTION: ALEXANDER POLYNOMIAL

\section{The Alexander polynomial for links in lens spaces} \label{sec:alexpoly}

In this Section we describe a Torres-type formula~\cite{torres}, constructed in~\cite{gh}, 
which relates the two-variable Alexander polynomial of a mixed link in $S^3$ to 
the corresponding Alexander polynomial of a link in $L(p,q)$.

We briefly recall the algebraic definition of the Alexander polynomial of a link in $S^3$, based on the Fox construction (see~\cite{turaev, huynh}, cf.~\cite{wada}).%, since this construction can be most naturally generalized to other 3-manifolds.

Given a group $\pi$ with a finite presentation $$\pi=\langle x_{1},\ldots ,x_{n} \mid r_{1},\ldots ,r_{m} \rangle,$$ denote by $H=\pi/\pi'$ its abelianization and by $F=\langle x_{1},\ldots ,x_{n}|\, \rangle $ the corresponding free group. Apply the chain of maps $$\Z F\stackrel{\frac{\partial }{\partial x}}{\longrightarrow }\Z F\stackrel{\gamma }{\longrightarrow}\Z \pi \stackrel{\alpha }{\longrightarrow}\Z H\;,$$ where $\frac{\partial }{\partial x}$ denotes the Fox differential, $\gamma $ is the quotient map by the relations $r_{1},\ldots ,r_{m}$ and $\alpha $ is the abelianization map. 

The \emph{Alexander-Fox matrix} of the presentation of $\pi$ is the matrix $A=\big [ \alpha (\gamma (\frac{\partial r_{i}}{\partial x_{j}})) \big ]_{1\leq i\leq m, 1\leq j\leq n }$. The \emph{first elementary ideal} $E_{1}(\pi)$ is the ideal of $\Z H$, generated by the determinants of all the $(n-1)$ minors of $A$. 

For a link $L$ in $S^3$, the abelianization of $\pi =\pi _{1}(S^{3}\backslash L,*)$ is a free abelian group, whose generators correspond to the components of $L$. 
For a $\nu$-component link, we have $\Z H \cong \Z \left[  t_1^{\pm1}, \ldots, t_\nu^{\pm1}\right]$.

Let $E_{1}(\pi)$ be the first elementary ideal, obtained from a presentation of $\pi =\pi _{1}(S^{3}\backslash L,*)$. The \emph{Alexander polynomial} $\Delta_L(t_1^{\pm1}, \ldots, t_n^{\pm1}) \in \Z \left[  t_1^{\pm1}, \ldots, t_n^{\pm1}\right]$ of a link $L$ is the generator of the smallest principal ideal containing $E_{1}(\pi)$. 

We are only interested in distinguishing the variable, corresponding to the fixed component, from the variables, corresponding to the moving link components. In the above construction, we thus replace the map $\alpha$ by the map $\eta \circ \alpha $, where $\eta \colon \Z H \rightarrow \Z \left[ s^{\pm 1}, t^{\pm 1} \right]$ is the canonical projection, defined by 
\begin{equation*}
\eta (t_{i})=
\begin{cases}
 s , &\text{if $t_i$ corresponds to the fixed component,}\\
 t , &\text{if $t_i$ corresponds to a moving component}\;.
\end{cases}
\end{equation*}
We obtain a two-variable Alexander polynomial $\Delta_L(s,t)$, which can be viewed as the Alexander polynomial of a link in the solid torus.

We are now ready to define the Alexander polynomial of a link $L$ in $L(p,q).$ Given a mixed link $\widehat{I} \cup L \subset S^3$, the following proposition allows us to describe the  link group of $\widehat{I}^{-p/q} \cup L$ (the fundamental group of $L(p,q) \setminus L$).

\begin{proposition}[\cite{rolfsen}]\label{prop1}
Let $\pi _{1}(S^{3}\backslash (\ucl),*) = \langle x_{1},\ldots ,x_{n}\mid w_{1},\ldots ,w_{n}\rangle$ be the presentation of the link group of $\widehat{I} \cup L$.
The presentation of the link group of $\widehat{I}^{-p/q} \cup L$ is given by 
\begin{xalignat}{1}\label{eq1}
& \pi _{1}(L(p,q)\backslash L,*)=\left \langle x_{1},\ldots ,x_{n} \mid w_{1},\ldots ,w_{n},m^{p}\cdot l^{-q}\right \rangle,
\end{xalignat}
where $m$ (resp. $l$) denote the meridian (resp. longitude) of the regular neighbourhood of $S^{3}\backslash \wi$. 
\end{proposition}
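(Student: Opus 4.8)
The plan is to realise the complement $L(p,q)\setminus L$ by a Dehn-surgery regluing and then compute its fundamental group with van Kampen's theorem. Fix a tubular neighbourhood $N=N(\wi)$ of the fixed component, disjoint from a tubular neighbourhood $N(L)$ of the moving component, so that $S^3\setminus(\ucl)$ deformation retracts onto $X:=S^3\setminus(\operatorname{int}N\cup\operatorname{int}N(L))$. By definition, $L(p,q)$ arises from $S^3$ by deleting $\operatorname{int}N$ and gluing back a solid torus $V$ along $\partial N=T^2$ so that the meridian of $V$ is sent to the $(-p/q)$-curve on $\partial N$; since the surgery is supported away from $L$, this exhibits
\[
 L(p,q)\setminus L\;=\;X\cup_{T^2}V .
\]

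Next I would pick a basepoint on $T^2$, thicken $X$ and $V$ to open sets whose intersection deformation retracts to $T^2$, and apply van Kampen's theorem:
\[
 \pi_1(L(p,q)\setminus L)\;\cong\;\pi_1(X)\ast_{\pi_1(T^2)}\pi_1(V).
\]
By hypothesis $\pi_1(X)\cong\pi_1(S^3\setminus(\ucl))=\langle x_1,\dots,x_n\mid w_1,\dots,w_n\rangle$, while $\pi_1(T^2)\cong\Z^2$ is generated by the meridian $m$ and the longitude $l$ of $\wi$; under $T^2\hookrightarrow X$ these become definite elements of $\pi_1(X)$, i.e. words in $x_1,\dots,x_n$.

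The crucial step is the other leg of the pushout. Since $V\cong S^1\times D^2$, the inclusion $\partial V\hookrightarrow V$ induces a \emph{surjection} $\pi_1(T^2)=\Z^2\to\pi_1(V)=\Z$ whose kernel is the infinite cyclic group generated by the meridian of $V$; by the choice of the regluing that meridian is the $(-p/q)$-curve, representing $m^p l^{-q}$ up to inversion, and since $\gcd(p,q)=1$ this single element already generates the whole kernel. Whenever one leg of a group pushout is surjective, the amalgam collapses: the composite $\pi_1(X)\to\pi_1(X)\ast_{\pi_1(T^2)}\pi_1(V)$ is onto (every class of $\pi_1(V)$ is hit, as $\pi_1(T^2)\to\pi_1(V)$ is onto) with kernel exactly the normal closure of the image of $\ker(\pi_1(T^2)\to\pi_1(V))$. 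Hence
\[
 \pi_1(L(p,q)\setminus L)\;\cong\;\pi_1(X)\big/\langle\!\langle m^p l^{-q}\rangle\!\rangle\;=\;\langle x_1,\dots,x_n\mid w_1,\dots,w_n,\,m^p l^{-q}\rangle,
\]
which is \eqref{eq1}; the relators $w_1,\dots,w_n$ survive intact, as they come entirely from the unaltered piece $X$.

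I expect the only genuinely delicate point to be the framing and orientation bookkeeping: one must verify that, with the conventions of the statement ($-p/q$ surgery, $m$ and $l$ the meridian and longitude of $\wi$), the meridian of the regluing solid torus $V$ corresponds on $\partial N$ precisely to $m^p l^{-q}$ (and not, say, to $m^{-p}l^{q}$ or $m^p l^{q}$), and that the longitude used is the one implicitly fixed by the parted mixed-braid/mixed-link picture. Everything else is the standard Dehn-surgery/van Kampen computation.
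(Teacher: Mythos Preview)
Your proof is correct and is precisely the standard van Kampen/Dehn-surgery argument. Note that the paper does not actually supply a proof of this proposition: it is stated with a citation to Rolfsen's \emph{Knots and Links}, where exactly this computation (decompose the surgered complement as the link exterior $X$ glued to a solid torus $V$ along $T^2$, apply van Kampen, and use surjectivity of $\pi_1(\partial V)\to\pi_1(V)$ to collapse the amalgam to $\pi_1(X)$ modulo the normal closure of the regluing meridian) is carried out. Your flagging of the sign/framing convention as the only delicate point is apt, and harmless here since $m^{p}l^{-q}$ and its inverse normally generate the same subgroup, so the resulting presentation is unaffected.
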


The abelianization of the fundamental group of a link in $L(p,q)$ may also contain torsion, see \cite[Corollary 2.10]{gh}. In this case we need the notion of a twisted Alexander polynomial. We recall the following from~\cite{cmm} (see also~\cite{cm1}).

% and continue with the definition for a link in $L(p,q)$.

%Our study of the Alexander polynomial of links in lens spaces is based on the Fox construction. 

%Recall that the fundamental group of a classical link admits a well-known Wirtinger presentation  $$\pi _{1}(S^{3}\backslash L,*)=\left <x_{1},\ldots ,x_{n}|\, r_{1},\ldots ,r_{n}\right >\;,$$ 
%obtained from a link diagram. Generators $x_{i}$ correspond to the simple closed loops based at $*$ and winding around the arcs of the diagram and $r_{i}$ is the Wirtinger relation $x_{i_{1}}x_{i_{3}}x_{i_{2}}^{-1}x_{i_{3}}^{-1}$ (positive crossing) or $x_{i_{1}}x_{i_{3}}^{-1}x_{i_{2}}^{-1}x_{i_{3}}$ (negative crossing), corresponding to the $i$-th crossing of the diagram, see Figure \ref{figwirt}. 

%\begin{figure}[ht]
%	\centering
%	\subfigure[positive crossing]{\begin{overpic}[page=1]{slike}
%		\put(23,44){$x_{i_1}$}\put(69,44){$x_{i_2}$}\put(41,52){$x_{i_3}$}
%		\label{fig-wirt-1}
%	\end{overpic}}
%	\qquad\qquad\qquad
%	\subfigure[negative crossing]{\begin{overpic}[page=2]{slike}
%		\put(23,44){$x_{i_2}$}\put(69,44){$x_{i_1}$}\put(41,52){$x_{i_3}$}
%		\label{fig-wirt-2}
%	\end{overpic}}
%	\caption{Wirtinger relations.}\label{figwirt}
%\end{figure}

Let $\pi$ be a finitely presented group and denote by $H=\pi/\pi'$ its abelianization. Every homomorphism $\sigma \colon Tors(H)\to \C ^{*}=\C \backslash \{0\}$ determines a twisted Alexander polynomial $\Delta ^{\sigma }(\mathcal{\pi})$ as follows. Choose a splitting $H=Tors(H)\times K$, where $K \cong H/Tors(H)$ is the free part of $H$.  The map $\sigma $ induces a ring homomorphism $\sigma \colon \Z H\to \C [K]$ sending $(f,g)\in Tors(H)\times K$ to $\sigma (f)g$. We apply the chain of maps $$\Z F\stackrel{\frac{\partial }{\partial x}}{\longrightarrow }\Z F\stackrel{\gamma }{\longrightarrow}\Z \pi \stackrel{\alpha }{\longrightarrow}\Z H\stackrel{\sigma }{\longrightarrow}\C [K]$$ and obtain the $\sigma $-twisted Alexander matrix $A^{\sigma }=\big [\sigma (\alpha (\gamma (\frac{\partial r_{i}}{\partial x_{j}})))\big ]_{i,j}$. The \emph{twisted Alexander polynomial} is defined by $\Delta ^{\sigma }(\pi)=\textrm{gcd}(\sigma (E_{1}(\mathcal{\pi})))$.

If we replace the twisted map $\sigma$ by the canonical projection $\tau \colon \Z H \rightarrow \Z K$, which sends the torsion part of $H$ to $1$, we obtain the \emph{Alexander polynomial} $\Delta(\pi)$.
%, which is defined to be the generator of the smallest principal ideal containing $E_{1}(\pi)$.
In order to obtain the one-variable polynomial $\Delta(\lupq)(t)$ of a $\nu$-component link $L$, we compose the projection $\tau $ by the canonical projection  $\Z K \cong \Z \left [  t_1^{\pm1}, \ldots, t_\nu^{\pm1}\right] \rightarrow \Z\left[ t ^{\pm 1} \right ] $, that sends each $t_i$ to $t$.

We continue by describing how to obtain the Alexander polynomial for $\lupq$ from the Alexander polynomial of $\ucl \subset S^3$.

Let $D$ be the disk, bounded by $\wi$. We may assume that $L$ intersects $D$ transversely in $k$ intersection points with intersection signs $\epsilon _{1},\ldots ,\epsilon _{k}\in \{-1,1\}$. Denote by $[L] = \sum_{i=1}^k \epsilon_i$ the homology class of $L$ in $H_1(S^3 \setminus \wi) \cong \Z$.

By Proposition~\ref{prop1}, the presentation of $\pi_1(L(p,q) \setminus L,*)$ is obtained from the presentation of the link group $\pi_{1}(S^{3}\setminus(\ucl),*)$ by adding one relation. The Alexander-Fox matrices are thus closely related and consequently so are the Alexander polynomials, as the following theorem states.

%% ALEXANDER THEOREM

\begin{theorem}[\cite{gh}]
\label{th1} Let $p' = \frac{p}{\gcd\{p,[L]\}}$ and $[L]' = \frac{[L]}{\gcd\{p,[L]\}}$. The Alexander polynomial of $\lupq$ and the two-variable Alexander polynomial of the classical link $\ucl$ are related by 
\begin{equation}\label{eq:alexlpq}
\Delta_{\lupq}(t)=
\begin{cases}
\frac{t-1}{t^{[L]'}-1}\, \Delta_{\ucl}(t^{q[L]'},t^{p'}) &\text{if } [L]'\neq 0\\
\Delta_{\ucl}(t^{q},t) &\text{if } [L]'=0\;.
\end{cases}
\end{equation}
\end{theorem}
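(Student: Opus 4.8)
The plan is to compare the Alexander--Fox matrices of the two link groups supplied by Proposition~\ref{prop1}. Passing from $\ucl$ to $\lupq$ only adjoins the single relation $m^{p}l^{-q}$, hence only adjoins one row to the Alexander--Fox matrix, and the formula should drop out of tracking what this does to the first elementary ideal; this is the Fox-calculus version of a Torres-type argument~\cite{torres}. First I would fix a presentation $\pi_{1}(S^{3}\setminus(\ucl),*)=\langle x_{1},\dots ,x_{n}\mid w_{1},\dots ,w_{n}\rangle$ read off the mixed link diagram, arranged so that one generator $x_{1}$ is a meridian $m$ of $\wi$, and write the longitude $l$ of $\wi$ as a word in the $x_{j}$. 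Let $A=[\eta\alpha\gamma(\partial w_{i}/\partial x_{j})]$ be the Alexander--Fox matrix over $\Z[s^{\pm 1},t^{\pm 1}]$, so that $\Delta_{\ucl}(s,t)$ is the $\gcd$ of its $(n-1)\times(n-1)$ minors; recall that one relation is redundant and that the fundamental Fox identity gives the column relation $A\cdot(\overline{x_{j}}-1)_{j}=0$, where $\overline{x_{j}}=s$ for meridians of $\wi$ and $\overline{x_{j}}=t$ for meridians of $L$. By Proposition~\ref{prop1}, $\pi_{1}(L(p,q)\setminus L,*)$ has the same presentation together with the one relation $w_{0}:=m^{p}l^{-q}$, so its Alexander--Fox matrix is $A$ with the extra row $\rho=[\eta\alpha\gamma(\partial w_{0}/\partial x_{j})]_{j}$ adjoined; the column relation survives, since $w_{0}$ too obeys the Fox identity and dies in $\pi_{1}(L(p,q)\setminus L)$.

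By the product and power rules of the Fox calculus,
\[
\frac{\partial (m^{p}l^{-q})}{\partial x_{j}}=\frac{m^{p}-1}{m-1}\,\frac{\partial m}{\partial x_{j}}-m^{p}l^{-q}\,\frac{l^{q}-1}{l-1}\,\frac{\partial l}{\partial x_{j}},
\]
so, applying $\eta\alpha\gamma$ with $\eta\alpha(m)=s$, $\eta\alpha(l)=t^{[L]}$ and using $m=x_{1}$,
\[
\rho_{j}=\frac{s^{p}-1}{s-1}\,\delta_{1j}\;-\;s^{p}t^{-q[L]}\,\frac{t^{q[L]}-1}{t^{[L]}-1}\;\overline{\partial l/\partial x_{j}}.
\]
On the level of abelianizations, passing to $L(p,q)\setminus L$ only imposes the consequence $s^{p}=t^{q[L]}$ of $w_{0}$. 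When $[L]'\neq 0$, since $\gcd(p,q)=1$ one checks that the free part of $H_{1}(L(p,q)\setminus L)$ is infinite cyclic, that the torsion has order $\gcd\{p,[L]\}$, and that following the coefficient map by $\tau$ (which kills this torsion) and by $t_{i}\mapsto t$ realises exactly the substitution $s\mapsto t^{q[L]'}$, $t\mapsto t^{p'}$; when $[L]'=0$ the relation degenerates to $s^{p}=1$, $s$ becomes torsion, and this is the source of the separate case. Hence $\Delta_{\lupq}(t)$ is, by the definition of Section~\ref{sec:alexpoly}, the $\gcd$ over $\Z[t^{\pm 1}]$ of the $(n-1)\times(n-1)$ minors of the augmented matrix after this substitution.

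Now I would split those minors according to whether the new row $\rho$ is used. The minors not using $\rho$ are exactly the $(n-1)\times(n-1)$ minors of $A$, so by the column and row relations each equals, up to a unit monomial, $\Delta_{\ucl}(s,t)\cdot(\overline{x_{j}}-1)$ for some $j$; after the substitution their $\gcd$ is $\Delta_{\ucl}(t^{q[L]'},t^{p'})\cdot\gcd(t^{q[L]'}-1,t^{p'}-1)=(t-1)\,\Delta_{\ucl}(t^{q[L]'},t^{p'})$, using $\gcd(q[L]',p')=1$. The minors using $\rho$ I would obtain by Laplace expansion along that row and substitution of the two terms of $\rho_{j}$: the $\delta_{1j}$-term feeds in lower minors of $A$ (governed by the second elementary ideal), while the longitude term is handled with the classical Torres fact that $\bigl(\overline{\partial l/\partial x_{j}}\bigr)_{j}$ lies, modulo the column-relation vector and an explicit monomial, in the $\Z[s^{\pm 1},t^{\pm 1}]$-row span of $A$. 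Carrying this through, together with the factor $\tfrac{t^{q[L]}-1}{t^{[L]}-1}$ and the substitution, should produce a new generator equal up to a unit to $\tfrac{t-1}{t^{[L]'}-1}\,\Delta_{\ucl}(t^{q[L]'},t^{p'})$; taking the $\gcd$ with the value $(t-1)\Delta_{\ucl}(t^{q[L]'},t^{p'})$ above then collapses the factor $t^{[L]'}-1$ --- legitimately, because Torres' relations for $\ucl$ force $\Delta_{\ucl}(t^{q[L]'},t^{p'})$ to be divisible by $1+t+\dots +t^{[L]'-1}$ (the relevant factor vanishes at every $[L]'$-th root of unity $\neq 1$) --- and yields the claimed identity. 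When $[L]'=0$ the longitude term drops out, the bookkeeping simplifies, and one reads off $\Delta_{\ucl}(t^{q},t)$.

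The step I expect to be the main obstacle is exactly this last one: controlling the Fox derivative $\overline{\partial l/\partial x_{j}}$ of the longitude and showing that adjoining the row $\rho$ cuts the elementary ideal down by precisely the factor $t^{[L]'}-1$ --- no more and no less --- while simultaneously keeping the torsion of order $\gcd\{p,[L]\}$ under control for the map $\tau$. It is here that the hypotheses $0<q<p$, $\gcd(p,q)=1$ and the passage to $p'$ and $[L]'$ are genuinely used; everything else is routine Fox-calculus bookkeeping.
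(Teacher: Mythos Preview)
The paper does not prove Theorem~\ref{th1}; it is quoted from~\cite{gh} and used as a black box in the proof of Theorem~\ref{thm:main}. There is therefore no proof in this paper to compare your proposal against.

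That said, your outline is the natural Fox-calculus/Torres-type argument and is presumably close in spirit to what is done in~\cite{gh}: Proposition~\ref{prop1} adds a single relator, hence a single row to the Alexander--Fox matrix, and your computation that killing the order-$\gcd\{p,[L]\}$ torsion in $H_{1}(L(p,q)\setminus L)$ induces exactly the substitution $s\mapsto t^{q[L]'}$, $t\mapsto t^{p'}$ is correct. You have also correctly located the genuine content: controlling the longitude row $\rho$ and showing the first elementary ideal shrinks by \emph{exactly} the factor $t^{[L]'}-1$. As written, that step is still a sketch --- the phrases ``should produce'' and the appeal to Torres' relations for the divisibility of $\Delta_{\ucl}(t^{q[L]'},t^{p'})$ by $1+t+\cdots+t^{[L]'-1}$ are assertions, not arguments, and the interaction of the $\delta_{1j}$-term with the second elementary ideal needs to be made explicit before the $\gcd$ collapses as claimed. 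If you intend this as a standalone proof rather than a pointer to~\cite{gh}, that is where the work remains.
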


It is also shown in~\cite{gh} that a normalized version of the Alexander polynomial in lens spaces, denoted by $\nabla_L(t)$, respects a skein relation 
 $$\nabla_{L_{+}}(t)-\nabla_{L_{-}}(t)=(t^{\frac{p'}{2}}-t^{-\frac{p'}{2}})\nabla_{L_{0}}(t),$$
 where $L_+, L_-,$ and $L_0$ is a skein triple in $L(p,q)$.

%%%%%%%%%%% BURAU

\section{The Burau representation} \label{sec:burau}

In \cite{morton}, Morton showed how to express the multivariable Alexander polynomial of a closed braid $\widehat{\beta}$ directly from the braid $\beta$ itself by the following construction. 

Take the reduced Burau representation $$B_n \rightarrow GL_{n-1}(\Z[a^{\pm 1}])\;,$$ given by
\renewcommand{\kbldelim}{(}% Left delimiter
\renewcommand{\kbrdelim}{)}% Right delimiter
$$
 \sigma_i \mapsto \overline{C}_i(a) \coloneqq  \kbordermatrix{
&1&&&i&&&n-1  \\
1&1 &&&&&&  \\
&&\ddots&&&&& \\ 
&&&1&&&& \\
i&&&a&\,-a\,&1&& \\
&&&&&1&& \\
&&&&&&\ddots& \\
n-1&&&&&&&1 
  },
$$
where the above matrix differs from the identity matrix solely at three places in the row $i$. In the case $i=0$ or $i=n-1$, the matrix is truncated appropriately. We label each strand of the braid $\beta$ by $t_1, \ldots,t_n$ by putting the label $t_j$ on the strand that starts from the $j$-th position at the bottom as in Figure \ref{braids}.

%$$\sigma_i \mapsto \overline{C}_i(a) \coloneqq \begin{pmatrix} 
%1 &&&&&&  \\
%&\ddots&&&&& \\ 
%&&1&&&& \\
%&&a&\,-a\,&1&& \\
%&&&&1&& \\
%&&&&&\ddots& \\
%&&&&&&1 \\
%\end{pmatrix},$$

\begin{figure}[ht]
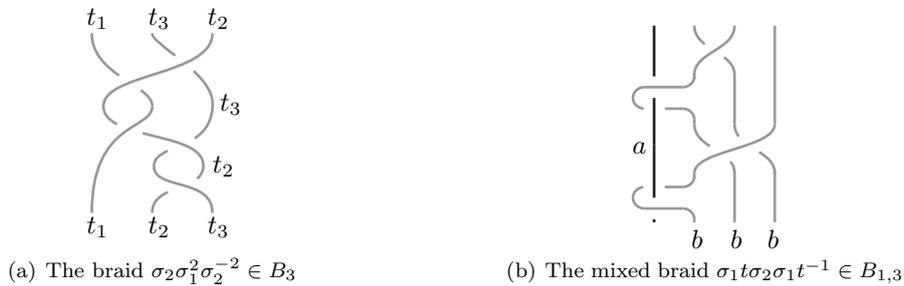

	\centering
	\subfigure[The braid $\sigma_2\sigma_1^2\sigma_2^{-2} \in B_3$]{\hspace{1cm}\begin{overpic}[page=2]{slike}
		\put(9,90){$t_1$}\put(34,90){$t_3$}\put(58,90){$t_2$}
		\put(9,5){$t_1$}\put(34,5){$t_2$}\put(58,5){$t_3$}
		\put(63,55){$t_3$}\put(60,30){$t_2$}	
		\label{braid1}
	\end{overpic}\hspace{1cm}}
	\hspace{15ex}
	\subfigure[The mixed braid $\sigma_1 t \sigma_2 \sigma_1 t^{-1} \in B_{1,3}$]{\hspace{1.5cm}\begin{overpic}[page=3]{slike}
		\put(5,38){$a$}\put(29,-1){$b$}\put(45,-1){$b$}\put(60,-1){$b$}
		\label{mixedbraid1}
	\end{overpic}\hspace{1.5cm}}
	\caption{A labelled braid (a) and a mixed braid (b)}\label{braids}
\end{figure}

We assign to the braid
$$\beta = \prod_{r=1}^{l} \sigma_{i_r}^{\epsilon_r} \in B_n$$
the \emph{coloured reduced Burau matrix} 
\begin{equation}\label{eq:b}
\overline{B}_\beta(t_1,\ldots,t_n) := \prod_{r=1}^l (\overline{C}_{i_r}(a_r))^{\epsilon_r},
\end{equation}
where the variable $a_r$ denotes the label of the undercrossing strand at crossing $r$, counted from top of the braid. Recall that a braid $\beta$ determines a permutation $\pi \in S_n$, such that any strand in $\beta$ connects position $j$ at the bottom to the position $\pi(j)$ at the top. 

Denoting by $A$ the axis of the braid $\beta $, we consider the multivariable Alexander polynomial $\Delta _{\widehat{\beta }\cup A}(t_1,\ldots,t_\nu,x)$, where $x$ denotes the variable, corresponding to the braid axis. Morton proved the following result. 

\begin{theorem}[\cite{morton}]
\label{thm:morton} The multivariable Alexander polynomial $\Delta_{\widehat{\beta}\cup A}$, where $A$ is the axis of the closed $n$-braid $\widehat{\beta}$, is given by the polynomial $\det(I-x \overline{B}_\beta (t_1,\ldots,t_\nu))$ with the identifications $t_{\pi(j)} = t_j$.
\end{theorem}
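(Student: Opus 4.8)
The plan is to identify the link complement with a mapping torus, extract a deficiency-one presentation of its group, and then compute the Alexander polynomial by Fox calculus. Since $S^3\setminus A$ is an open solid torus fibred over $S^1$ with disc fibres, and $\widehat{\beta}$ meets each fibre in the $n$ points permuted by $\beta$, the complement $S^3\setminus(\widehat{\beta}\cup A)$ is the mapping torus of the homeomorphism of the $n$-punctured disc $D_n$ determined by $\beta$. Writing $F_n=\pi_1(D_n)=\langle x_1,\dots,x_n\rangle$ and letting $\beta_\#$ denote the induced automorphism, this yields the presentation
\[
\pi_1\big(S^3\setminus(\widehat{\beta}\cup A)\big)=\big\langle x_1,\dots,x_n,x \;\big|\; x^{-1}x_j x=\beta_\#(x_j),\ 1\le j\le n\big\rangle,
\]
with $x_1,\dots,x_n$ meridians of the strands and $x$ corresponding to the axis $A$. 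Abelianizing, the $j$-th relator collapses to $t_j=t_{\pi(j)}$ --- exactly the identification in the statement --- while $x$ survives as an independent variable.

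Next I would apply the Fox derivative followed by the abelianization map $\phi$ (carrying along the identifications $t_{\pi(j)}=t_j$) to the relators $R_j=x^{-1}x_j x\,\beta_\#(x_j)^{-1}$. The product rule gives $\phi(\partial R_j/\partial x_k)=x^{-1}\delta_{jk}-\phi(\partial\beta_\#(x_j)/\partial x_k)$ for $k\le n$ --- the conjugating factor $x^{-1}x_j x\,\beta_\#(x_j)^{-1}$ dies under $\phi$ precisely because of the identification --- together with $\phi(\partial R_j/\partial x)=x^{-1}(t_j-1)$. Thus the Alexander-Fox matrix of the presentation is $\big[\,x^{-1}I-\mathcal B\;\big|\;x^{-1}\mathbf d\,\big]$, where $\mathbf d=(t_1-1,\dots,t_n-1)^{\mathsf T}$ and $\mathcal B=\big[\phi(\partial\beta_\#(x_j)/\partial x_k)\big]_{j,k}$ is $n\times n$. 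The one genuinely substantive point here is the identification of $\mathcal B$ with the ordered product of the $n\times n$ \emph{unreduced} elementary Burau matrices carrying the under-strand colours $a_r$ of~\eqref{eq:b}; this is the Fox chain rule applied to $\beta=\prod_r\sigma_{i_r}^{\epsilon_r}$, and it is where one must pin down, once and for all, the left/right action, the top-to-bottom reading of the braid, and the labelling of under-strands at positive and negative crossings.

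What remains is linear algebra. The fundamental Fox identity $\sum_{k\le n}\phi(\partial R_j/\partial x_k)(t_k-1)+\phi(\partial R_j/\partial x)(x-1)=0$ forces $\mathcal B\mathbf d=\mathbf d$, so $x=1$ is a root of $\det(I-x\mathcal B)$ and $(x-1)$ divides it. Deleting the $x$-column of the Alexander-Fox matrix produces the minor $\det(x^{-1}I-\mathcal B)=x^{-n}\det(I-x\mathcal B)$, and, by the classical relation among the maximal minors of a matrix whose columns satisfy a single linear relation, deleting the $x_j$-column produces $\pm(t_j-1)\,x^{-n}\det(I-x\mathcal B)/(x-1)$. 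Since $x-1,\,t_1-1,\dots,t_n-1$ are coprime in $\Z[t_1^{\pm1},\dots,t_n^{\pm1},x^{\pm1}]$, the greatest common divisor of all maximal minors --- i.e.\ $\Delta_{\widehat{\beta}\cup A}$ --- equals $\det(I-x\mathcal B)/(x-1)$ up to units.

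Finally I would reduce the Burau matrix: a fixed invertible matrix, independent of $\beta$, conjugates every unreduced elementary Burau matrix into block-triangular form with diagonal blocks $\overline{C}_i(a)$ and $1$, hence conjugates $\mathcal B$ into block-triangular form with blocks $\overline{B}_\beta$ and $1$, so that $\det(I-x\mathcal B)=(1-x)\det(I-x\overline{B}_\beta)$. Combining this with the previous step gives $\Delta_{\widehat{\beta}\cup A}=\det(I-x\overline{B}_\beta)$ up to units, which is the claim. I expect the main obstacle to be bookkeeping rather than anything conceptual: making the Fox chain rule reproduce the product~\eqref{eq:b} with the correct under-crossing colours, exhibiting the change of basis that reduces the Burau matrix, and keeping all signs, orientations and the $x\leftrightarrow x^{-1}$ axis convention consistent throughout.
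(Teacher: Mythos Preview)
The paper does not prove this theorem: it is quoted from Morton~\cite{morton} without proof and used as a black box to derive equation~\eqref{eq:morton} and, ultimately, Theorem~\ref{thm:main}. So there is no proof in the paper to compare against.

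That said, your outline is essentially Morton's own argument. The mapping-torus description of $S^3\setminus(\widehat{\beta}\cup A)$, the resulting deficiency-one presentation, the identification of the Fox Jacobian of the braid automorphism with the unreduced coloured Burau matrix, the use of the fundamental Fox identity to exhibit the eigenvector $\mathbf d$ (and hence the factor $1-x$), and the passage to the reduced representation via a block-triangular change of basis are precisely the steps in~\cite{morton}. Your caveats about bookkeeping --- conventions for the braid action, under-strand colouring at positive and negative crossings, and the axis variable --- are well placed; these are indeed where the signs are won or lost. One small point: after the identifications $t_{\pi(j)}=t_j$ several entries of $\mathbf d$ coincide, so the coprimality argument at the gcd step should be phrased in terms of the \emph{distinct} $t_i-1$ (one per link component) together with $x-1$; these are pairwise coprime and the conclusion goes through.
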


On the other hand, the Torres-Fox formula  obtained in~\cite{torres} tells us how the Alexander polynomial of a link changes when we remove one of its components.
For a two component link $L = L_1 \cup L_2$, we have
$$\Delta_{L_1}(t_1) = \frac{1-t_1}{1-t_1^{l}} \Delta_L(t_1,1),$$
where $l$ is the linking number of $L_1$ and $L_2$,
and for a $\nu $-component link $L = L_1 \cup \ldots \cup L_\nu$, where $\nu >2$, the formula states
$$\Delta_{L_1\cup\ldots \cup L_{\nu-1}}(t_1,\ldots,t_{\nu -1}) = \frac{1}{1-t_1^{l_1}t_2^{l_2}\cdots t_{\nu-1}^{l_{\nu-1}}} \Delta_L(t_1,\cdots,t_{\nu-1},1),$$
where $l_i$ denotes the linking number of $L_i$ and $L_\nu$.

Once we suppress the axis $A$ in Theorem~\ref{thm:morton} by taking $x=1$, we can directly apply the Torres-Fox formula and obtain the following equality:
\begin{equation}\label{eq:morton}
\frac{\det(I-\overline{B}_\beta(t_1,\ldots,t_\nu))}{1-t_1 \cdots t_\nu}\bigg|_{t_{\pi(j)}=t_j} = 
\begin{cases}
\Delta_L(t_1,\ldots,t_\nu), &\text{if } \nu > 1,\\
\frac{\Delta_L(t_1)}{1-t_1}, &\text{if } \nu = 1,\\
\end{cases}
\end{equation}
where $\nu$ is the number of components of the link $\widehat{\beta}$.

Following Morton's construction, we define a representation of the mixed braid group on one strand
$$\rho: B_{1,n} \rightarrow GL_n(\Z[a^{\pm 1}, b^{\pm 1}])$$
by
\begin{equation}\label{rho}
\rho(t) = 
\kbordermatrix{
&1&2&&n \\
1&ab & 1-b &  \\
2&&1&& \\
&&&\ddots& \\
n&&&&1 \\
  }, \qquad \rho(\sigma_i) = \kbordermatrix{
&1&&&i+1&&&n  \\
1&1 &&&&&&  \\
&&\ddots&&&&& \\ 
&&&1&&&& \\
i+1&&&a&\,-a\,&1&& \\
&&&&&1&& \\
&&&&&&\ddots& \\
n&&&&&&&1 \\
  },
\end{equation}
where the matrices above differ from the identity matrix solely at the first two places in the first row for $\rho(t)$ and the three places in the $(i+1)$-th row for $\rho(\sigma_i)$. 

For example, a representation of the mixed braid group $B_{1,4}$ is given by
\begin{center}
\begin{tabular}{cccc}
$t \mapsto \begin{pmatrix} ab&1-b&0&0 \\ 0&1&0&0 \\ 0&0&1&0 \\ 0&0&0&1 \end{pmatrix}$,
&\;
$\sigma_1 \mapsto \begin{pmatrix} 1&0&0&0 \\ a&-a&1&0 \\ 0&0&1&0 \\ 0&0&0&1 \end{pmatrix}$, &\;
$\sigma_2 \mapsto \begin{pmatrix} 1&0&0&0 \\ 0&1&0&0 \\ 0&a&-a&1 \\ 0&0&0&1 \end{pmatrix}$, &\;
$\sigma_3 \mapsto \begin{pmatrix} 1&0&0&0 \\ 0&1&0&0 \\ 0&0&1&0 \\ 0&0&a&-a \end{pmatrix}$.
\end{tabular}
\end{center}

We are now ready to state our main theorem.

%%% MAIN THEOREM %%%

\begin{theorem}
\label{thm:main}
Let $I \cup \beta \in B_{1,n}$ be a mixed braid on one strand, such that $\widehat{I}^{-p/q} \cup \widehat{\beta}$ represents a link $\widehat{\beta}$ in $L(p,q)$.
Let $[\widehat{\beta}]$ be the sum of the exponents of the generator $t$ appearing in $I \cup \beta$. Denote $p'=\frac{p}{\gcd\{p,[\widehat{\beta}]\}}$ and $[\widehat{\beta}]'=\frac{[\widehat{\beta}]}{\gcd\{p,[\widehat{\beta}]\}}$. 
The Alexander polynomial of the link $L = \widehat{I}^{-p/q} \cup \widehat{\beta}$ is given by
\begin{equation} \label{eq:final}
\Delta_{L}(t) =
\begin{cases}
\frac{t-1}{(t^{[\wb]'}-1)(1-t^{np'+q[\wb]'})}\, \det\big(I-\rho(I \cup \beta)(t^{q[\wb]'},t^{p'})\big),& \text{if } [\wb ]\neq 0 \\
\frac{1}{1-t^{n+q}}\, \det\big(I-\rho(I \cup \beta)(t^{q},t)\big),&\text{if } [\wb ]= 0 
\end{cases}
\end{equation}
\end{theorem}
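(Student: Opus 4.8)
The plan is to reduce the statement to Morton's theorem and Theorem~\ref{th1} by identifying the matrix $\rho(I\cup\beta)$ with a suitably specialized colored reduced Burau matrix of an \emph{auxiliary classical braid}. The key observation is that $\rho$ is built from the same elementary matrices $\overline C_i(a)$ that appear in the colored Burau construction, together with the single ``winding'' matrix $\rho(t)$. First I would show that $\rho(t)$ is exactly the reduced Burau matrix, in Morton's colored sense, of the generator of $B_{n+1}$ that takes the fixed strand once around the first moving strand (i.e. the full twist $\sigma_1^{2}$ in the appropriate colored bookkeeping), with the variable on the fixed strand set to $a$ and the variable on the first moving strand set to $b$; the $1-b$ entry and the $ab$ entry are precisely what Morton's recipe outputs for such a partial wind. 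This lets me assert that $\rho(I\cup\beta)(a,b)=\overline B_{\gamma}(a,b,b,\dots,b)$ for the classical braid $\gamma$ obtained from $I\cup\beta$ by replacing each $t^{\pm1}$ by the corresponding colored wind, after collapsing all moving-strand colors to a single variable $b$ and calling the fixed-strand color $a$.

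With that identification in hand, the closure $\widehat\gamma$ is the mixed link $\widehat I\cup\widehat\beta$, and its braid axis $A$ is an unknot that links $\widehat\beta$ in $n$ points and $\widehat I$ in one point (here is where the exponent $n$ enters, and the extra $q$ will come from the surgery slope on $\widehat I$ via Theorem~\ref{th1}). I would then apply Theorem~\ref{thm:morton} to get $\det(I-x\,\rho(I\cup\beta)(a,b)) = \Delta_{\widehat I\cup\widehat\beta\cup A}(a,b,x)$ after the color identifications forced by the permutation $\pi$ of $\gamma$; since all moving strands carry $b$ and the fixed strand carries $a$, these identifications are automatically satisfied. Setting $x=1$ and invoking the Torres--Fox formula exactly as in~\eqref{eq:morton} — now with a three-variable link, but with two of the variables already equal — yields
\begin{equation*}
\frac{\det\big(I-\rho(I\cup\beta)(a,b)\big)}{1-a\,b^{\,n}} \;=\; \frac{\Delta_{\widehat I\cup\widehat\beta}(a,b)}{1-a\,b^{\,?}}\,,
\end{equation*}
where the remaining linking-number exponents on the right are determined by $[\widehat\beta]$ and $n$; reconciling these exponents with the claimed $1-t^{np'+q[\widehat\beta]'}$ after the substitution below is the bookkeeping I would need to do carefully.

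Next I would feed the two-variable Alexander polynomial $\Delta_{\widehat I\cup\widehat\beta}(s,t)$ (with $s$ the fixed-component variable, so $s\leftrightarrow a$ and $t\leftrightarrow b$) into Theorem~\ref{th1}. That theorem says $\Delta_{\widehat I^{-p/q}\cup\widehat\beta}(t)$ equals $\frac{t-1}{t^{[\widehat\beta]'}-1}\,\Delta_{\widehat I\cup\widehat\beta}(t^{q[\widehat\beta]'},t^{p'})$ when $[\widehat\beta]'\neq0$, and $\Delta_{\widehat I\cup\widehat\beta}(t^{q},t)$ when $[\widehat\beta]'=0$ (note $[\widehat\beta]=[L]$ since the exponent sum of $t$ counts the signed intersections of the moving component with the disk bounded by $\widehat I$). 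Substituting $a=t^{q[\widehat\beta]'}$, $b=t^{p'}$ into the displayed ratio above, the denominator $1-a b^{\,n}$ becomes $1-t^{q[\widehat\beta]'+np'}$, which matches the claimed factor; combining the two ratios and cancelling gives precisely \eqref{eq:final} in the case $[\widehat\beta]\neq0$, and the analogous (simpler) substitution gives the $[\widehat\beta]=0$ case, where $1-ab^n$ becomes $1-t^{q+n}$ as stated.

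The main obstacle I anticipate is Step~1: verifying rigorously that $\rho(t)$ is the colored reduced Burau matrix of the partial wind, i.e. that the product formula \eqref{rho} really does compute $\overline B_\gamma$ for the expanded braid $\gamma$, including getting the \emph{reduced} normalization right (Morton works with the reduced representation, and the truncation conventions at the first/last row must be checked against how $\rho(\sigma_i)$ is indexed by $i+1$ rather than $i$). One clean way to handle this is to argue directly that $\rho$ is a well-defined representation of $B_{1,n}$ — checking the four families of relations in the presentation, in particular the mixed relation $t\sigma_1 t\sigma_1=\sigma_1 t\sigma_1 t$ — and then identify it with the colored Burau matrix of the expansion only up to the scalar ambiguity that the Torres--Fox quotient in \eqref{eq:morton} automatically kills. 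The relation checks are routine $2\times2$ or $3\times3$ matrix computations over $\Z[a^{\pm1},b^{\pm1}]$; the conceptual content is entirely in matching the geometric wind with the algebraic matrix $\rho(t)$, after which everything is an assembly of Theorems~\ref{th1}, \ref{thm:morton}, and the Torres--Fox formula.
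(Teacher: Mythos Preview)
Your approach is essentially the paper's own proof: embed $B_{1,n}\hookrightarrow B_{1+n}$ via $t\mapsto\sigma_1^2$, $\sigma_i\mapsto\sigma_{i+1}$, observe that with the fixed strand colored $a$ and all moving strands colored $b$ one has $\rho(t)=\overline C_1(a)\,\overline C_1(b)$ and $\rho(\sigma_i)=\overline C_{i+1}(b)$, hence $\rho(I\cup\beta)=\overline B_{i(I\cup\beta)}(a,b)$, and then apply~\eqref{eq:morton} followed by Theorem~\ref{th1}. The ``main obstacle'' you anticipate dissolves once you write out the product $\overline C_1(a)\,\overline C_1(b)$, and your displayed Torres--Fox equation should carry no denominator on the right-hand side: since $\widehat I\cup\widehat\beta$ has $\nu\ge 2$ components, \eqref{eq:morton} already gives $\dfrac{\det(I-\rho(I\cup\beta)(a,b))}{1-ab^{\,n}}=\Delta_{\widehat I\cup\widehat\beta}(a,b)$ directly, so the only substitution left is $(a,b)\mapsto(t^{q[\widehat\beta]'},t^{p'})$ (resp.\ $(t^q,t)$).
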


\begin{proof}
Define a map $i: B_{1,n} \rightarrow B_{1+n}$ by $i(t) = \sigma_{1}^{2}$ and $i(\sigma_i) = \sigma_{i+1}$. It is easy to check that $i$ is a group monomorphism. We now label the first strand $I$ by $a$ and the rest of the strands of $\beta$ by $b$.
Observe in Equations~\eqref{rho} that $\rho(t) = \overline{C}_1(a) \overline{C}_1(b)$ and $\rho(\sigma_i) = \overline{C}_{i+1}(b)$, thus the (bi)coloured reduced Burau matrix satisfies $\overline{B}_{i(\ib)}(a,b) = \rho(\ib)$.
By Theorem~\ref{thm:morton}, the 2-variable Alexander polynomial of $\wib$ is given by
\begin{equation}\label{eq:tor}
\Delta_{\wib}(a,b) = \frac{\det\big(I-\rho(\ib)(a,b)\big)}{1-ab^n}.
\end{equation}
By Theorem~\ref{th1}, we can use Equation~\eqref{eq:alexlpq} to obtain the Alexander polynomial of the link $L$ in $L(p,q)$. If $[\wb ]\neq 0$, we have
$$
\Delta_{\widehat{I}^{-p/q} \cup \widehat{\beta}}(t) = 
\frac{t-1}{t^{[\wb]'}-1}\, \Delta_{\wib}(t^{q[\wb]'},t^{p'}) = 
\frac{(t-1)\det\big(I-\rho(I \cup \beta)(t^{q[\wb]'},t^{p'})\big)}{(t^{[\wb]'}-1)(1-t^{np'+q[\wb]'})}$$ 
and if   $[\wb ]=0$ we have
$$\Delta_{\widehat{I}^{-p/q} \cup \widehat{\beta}}(t) = \Delta_{\wib}(t^{q},t) = 
\frac{\det\big(I-\rho(I \cup \beta)(t^{q},t)\big)}{1-t^{n+q}}.$$

\end{proof}

\begin{remark} It follows from the proof of Theorem \ref{thm:main} that the 2-variable Alexander polynomial of a link in the solid torus, seen as a mixed link on one fixed strand $\wib$, is given by Formula~(\ref{eq:tor}).
%$$\Delta_{\wib}(a,b) = \frac{\det\big(I-\rho(\ib)(a,b)\big)}{1-ab^n}.$$
\end{remark}

\begin{example}
It has been calculated in \cite{gh} that the Alexander polynomial for the knot in Figure~\ref{fig:mixed-link} is equal to $t^{2p}-t^p+1$. The braid representative of this knot is represented in Figure~\ref{fig:mixed-braid}.
We have $$\rho(t\sigma_1^3) =  \begin{pmatrix} ab&1-b \\ 0&1 \end{pmatrix}
\begin{pmatrix} 1&0\\a&-a\end{pmatrix}
\begin{pmatrix} 1&0\\a&-a\end{pmatrix}
\begin{pmatrix} 1&0\\a&-a\end{pmatrix}
=
\begin{pmatrix}
 -b a^3+a^3+b a^2-a^2+a & a^3 b-a^3 \\
 a^3-a^2+a & -a^3 \\
\end{pmatrix},
$$

$$
\det\big(I-\rho(t\sigma_1^3)(t^{q},t^{p})\big) = -t^{2 p+q}+t^{3 p+q}-t^{4 p+q}+t^{2 p}-t^p+1.
$$
Since $[t\sigma_1^3] = 1$, Equation~\eqref{eq:final} yields
$$\Delta_L(t) = \frac{-t^{2 p+q}+t^{3 p+q}-t^{4 p+q}+t^{2 p}-t^p+1}{1-t^{2p+q}} = t^{2 p}-t^p+1\;.$$

\end{example}

\subsubsection*{Acknowledgments}
The first author was supported by the Slovenian Research Agency grants J1-8131, J1-7025, N1-0064, and P1-0292.
The second author was supported by the Slovenian Research Agency grant N1-0083.

% Bibliography


\begin{thebibliography}{99}



\bibitem{cmm} A. Cattabriga, E. Manfredi, and M. Mulazzani, {\em On knots and links in lens spaces}, Topology Appl. \emph{160}:2 (2013), 430--442.

\bibitem{cmr} A. Cattabriga, E. Manfredi, E., L. Rigolli, {\em Equivalence of two diagram representations of links in lens spaces and essential invariants}, L. Acta Math. Hungar. \emph{146}(1) (2015), 168--201.

\bibitem{cm1} A. Cattabriga, E. Manfredi, {\em Diffeomorphic vs Isotopic Links in Lens Spaces}, Mediterr. J. Math. \emph{15}:172 (2018) 

\bibitem{dl1} I. Diamantis and S. Lambropoulou, {\em Braid equivalence in 3-manifolds with rational surgery description}, Topol. Appl. \emph{194} (2015), 269--295.

\bibitem{dlp} I. Diamantis, S. Lambropoulou, J. Przytycki:
{\it Topological steps toward the Homypt skein module of the lens spaces $L(p,1)$ via braids}, Journal of Knot theory and its Ramications, \emph{25}:14 (2016).

\bibitem{dl2} I. Diamantis, S. Lambropoulou, {\em A new basis for the Homflypt skein module of the solid torus}, Pure Appl. Algebra. \emph{220}:2 (2016), 577--605.

%\bibitem{fox} R. H. Fox, {\em A quick trip through knot theory}, Topology of 3-manifolds, MK, Fort Jr editor, Prentice-Hall (1962).

\bibitem{gabr} B. Gabrov\v sek, {\em Tabulation of Prime Knots in Lens Spaces}, Mediterr. J. Math. \emph{44}:88 (2017).

%\bibitem{gma} B. Gabrov\v sek, E. Manfredi, {\em On the Seifert fibered space link group}. Topol. Appl. \emph{206} (2016), 255--275.

\bibitem{gm} B. Gabrov\v sek, M. Mroczkowski, {\em The HOMFLYPT skein module of the lens spaces $L_{p,1}$}, Topology Appl. \emph{175} (2014) 72--80.

\bibitem{gh0} B. Gabrov\v sek, E. Horvat, {\em Knot invariants in lens spaces}, Knots, low-dimensional topology and applications: Knots in Hellas, International Olympic Academy, Greece (2019) 347--361.


\bibitem{gh}  E. Horvat, B. Gabrov\v sek, {\em The Alexander polynomial of links in lens spaces}, J. Knot Theory Ramifications, \emph{28}:8 (2019) 1--28.


\bibitem{huynh} V. Q. Huynh, T. T. Q. Le {\em Twisted Alexander polynomial of links in the projective space}, J. Knot Theory Ramifications \emph{17}(4) (2008), 411--438.

%\bibitem{kauff} L. H. Kauffman, {\em The Conway polynomial}, Topology \emph{20}(1) (1981), 101--108.


\bibitem{la1} S. Lambropoulou, {\it Solid torus links and Hecke algebras of B-type}, Quantum Topology, D. N. Yetter Ed., World Scientific Press (1994), 225--245.
 

\bibitem{lr1} S. Lambropoulou, C. P. Rourke, {\it Markov's theorem in 3-manifolds}, Topology and its Applications {\emph 78} (1997), 95--122.



\bibitem{la2} S. Lambropoulou, {\it Knot theory related to generalized and cyclotomic Hecke algebras of type B}, 
J. Knot Theory Ramif. \emph{8} (1999) 621--658.

\bibitem{la} S. Lambropoulou, {\it Braid structures in knot complements, handlebodies and 3-manifolds}, 
Proceedings of Knots in Hellas '98,  Ser. Knots Everything \emph{24}, World Scientific Press, USA (2000), 274--289.


\bibitem{lr2} S. Lambropoulou, C. P. Rourke, {\it Algebraic Markov equivalence for links in 3-manifolds}, 
Compositio Math. {\emph 142} (2006), 1039--1062.




\bibitem{lick} W. B. R. Lickorish, {\em A representation of orientable combinatorial 3-manifolds}, Ann. of Math. \emph{76}(3) (1962), 531--540.


%\bibitem{lin} X. S. Lin, {\em Representations of knot groups and twisted Alexander polynomials}, Acta Math. Sin. (Engl. Ser.) \emph{17}(3) (2001), 361--380.

%\bibitem{milnor} J. Milnor, {\em A duality Theorem for Reidemeister torsion}, Ann. of Math. \emph{76}(2) (1962), 137--147.

\bibitem{morton} H. R. Morton, {\em The Multivariable Alexander Polynomial for a Closed Braid}, Contemp. Math. \emph{233} (1999), 167--172.

\bibitem{mr1} M. Mroczkowski, {\em Kauffman bracket skein module of the connected sum of two projective spaces}, J. Knot Theory Ramifications \emph{20}(5) (2010), 651-675.

\bibitem{mr2} M. Mroczkowski, {\em Kauffman bracket skein module of a family of prism manifolds}, J. Knot Theory Ramifications \emph{20}(159) (2011), 159--170.

\bibitem{mr3} M. Mroczkowski, {\em The Dubrovnik and Kauffman skein modules of the lens spaces $L_{p,1}$}, J. Knot Theory Ramifications \emph{20}(159) (2018)

\bibitem{rolfsen} D. Rolfsen, {\em Knots and links}, AMS Chelsea Publishing, Providence, RI (2003).

\bibitem{torres} G. Torres, {\em On the Alexander polynomial}, Ann. of Math. \emph{57}(1) (1953), 57--89.

\bibitem{turaev} V. Turaev, {\em The  Alexander  polynomial  of  a  three-dimensional  manifold}, (Russian) Mat. Sb. (N.S.) \emph{97}(139) (1975), 341--359.

\bibitem{wada} M. Wada, {\em Twisted Alexander  polynomial for finitely presentable  groups}, Topology \emph{33}(2) (1994), 241--256.


\end{thebibliography}
\end{document}